\documentclass[12pt,a4paper]{amsart}
\usepackage{amsmath, amssymb, amsfonts,enumerate}
\usepackage[all]{xy}
\usepackage{amscd}
\usepackage{comment}

\newtheorem{theorem}{Theorem}[section]
\newtheorem{lemma}[theorem]{Lemma}
\newtheorem{corollary}[theorem]{Corollary}

 \theoremstyle{definition}
 
 \newtheorem{remark}[theorem]{Remark}

 \newtheorem{example}[theorem]{Example}

\numberwithin{equation}{section}
\newcommand {\N}{\mathbb{N}} 
\newcommand {\Z}{\mathbb{Z}} 
\newcommand {\R}{\mathbb{R}} 
\newcommand {\Q}{\mathbb{Q}} 
\newcommand {\C}{\mathbb{C}} 

\newcommand{\T}{\mathbb{T}}

\begin{document}
\title[Surjunctivity of algebraic dynamical systems]{A note on the surjunctivity of algebraic dynamical systems}
\author{Tullio Ceccherini-Silberstein}
\address{Dipartimento di Ingegneria, Universit\`a del Sannio, C.so
Garibaldi 107, 82100 Benevento, Italy}
\email{tceccher@mat.uniroma3.it}
\author{Michel Coornaert}
\address{Universit\'e de Strasbourg, CNRS, IRMA UMR 7501, F-67000 Strasbourg, France}
\email{michel.coornaert@math.unistra.fr}
\subjclass[2010]{37D20,  22D45, 22E40, 54H20}
\keywords{algebraic dynamical system, compact group, surjunctivity, expansivity, mixing, descending chain condition, topological entropy}
\begin{abstract}
Let $X$ be a compact  metrizable  group and  $\Gamma$  a countable group acting  on $X$ by continuous group automorphisms.
We give sufficient conditions under which the dynamical system $(X,\Gamma)$ is  surjunctive, i.e.,
every injective continuous map $\tau \colon X \to X$ commuting with the action of 
$\Gamma$  is surjective.
\end{abstract}
\date{\today}
\maketitle

\section{Introduction}

Consider a dynamical system $(X,\Gamma)$ consisting of a compact metrizable space $X$ equipped with a continuous action of a countable group $\Gamma$.
A map $\tau \colon X \to X$ is said to be \emph{$\Gamma$-equivariant} if it commutes with the action of $\Gamma$ on $X$, i.e., 
$\tau(\gamma x) = \gamma \tau(x)$ for all $\gamma \in \Gamma$ and $x \in X$.
Following a terminology introduced by Gottschalk in \cite{gottschalk},
let us say that the dynamical system $(X,\Gamma)$ is \emph{surjunctive} if every 
injective $\Gamma$-equivariant continuous map
$\tau \colon X \to X$ is surjective (and hence a homeomorphism).
\par
There are several important classes of dynamical systems that are known to be surjunctive.
\par
First note that the dynamical system  $(X,\Gamma)$ is surjunctive whenever  the phase pace $X$ is \emph{incompressible}, i.e., there is no proper subset of $X$ 
that is  homeomorphic to $X$.
This is for example the case when $X$ is a closed topological manifold (e.g. a compact Lie group) by Brouwer's invariance of domain.
\par
Another class of surjunctive dynamical systems is provided  by the systems that satisfy the descending chain condition.
We recall that one says that the dynamical system  $(X,\Gamma)$ satisfies the \emph{descending chain condition} (\emph{d.c.c.}\ for short)  if every decreasing sequence
$$
X = X_0 \supset X_1 \supset X_2 \supset \dots
$$
of $\Gamma$-invariant closed subsets $X_i \subset X$ ($i \in \N$) eventually stabilizes, i.e., there is an integer $k \geq 0$ such that
$X_i = X_k$ for all $i \geq k$.
Indeed, suppose that $(X,\Gamma)$ satisfies the d.c.c.\ and $\tau \colon X \to X$ is an injective continuous $\Gamma$-equivariant map.
  By applying the d.c.c.\ to the sequence
  $$
  X= \tau^0(X) \supset \tau(X) = \tau^1(X) \supset \tau^2(X) \supset \dots,
  $$
  we see that there is an integer $k \geq 0$ such that
  $\tau^k(X) = \tau^{k + 1}(X)$.
Since $\tau^k$ is injective, this implies 
$X = \tau(X)$, showing that $\tau$ is surjective.
  Note that a minimal dynamical system
  (i.e., a system containing no proper invariant closed subsets)
   trivially satisfies the d.c.c.\
  Actually every $\Gamma$-equivariant continuous map $\tau \colon X \to X$ is surjective when 
  $(X,\Gamma)$ is minimal.
  More generally, $(X,\Gamma)$ satisfies the d.c.c.\ if $X$ contains only finitely many closed 
  $\Gamma$-invariant subsets or if every closed $\Gamma$-invariant subset of $X$ is finite. 
  \par
There are also several classes of symbolic dynamical systems that are known to be surjunctive. Let $S$ be a  compact metrizable 
space, called the \emph{alphabet} or the \emph{space of symbols}.
   Given a countable group $\Gamma$,  the 
  $\Gamma$-\emph{shift} on the alphabet $S$ is the dynamical system   $(S^\Gamma, \Gamma)$,
  where $S^\Gamma = \{x \colon \Gamma \to S\}$ is equipped with the product  topology  and the action of $\Gamma$ on 
  $S^\Gamma$ is given by the formula $(\gamma x)(\gamma') := x(\gamma^{-1} \gamma')$ for 
  all $\gamma,\gamma' \in \Gamma$ and $x \in S^\Gamma$. 
These shift systems are not surjunctive in general.
Indeed, suppose  for example that the alphabet space $S$ is compressible (e.g. $S$ is the closed unit interval $[0,1] \subset \R$
or the Cantor set $K \subset [0,1]$ or the infinite-dimensional torus
$\T^\N$ with $\T := \R/\Z$) and let $\iota \colon S \to S$ be an injective continuous map that is not surjective. 
Then the map $\tau \colon S^\Gamma \to S^\Gamma$ given by
$\tau(x)(\gamma) := \iota(x(\gamma))$ for all $x \in S^\Gamma$ and $\gamma \in \Gamma$,
is clearly   injective, $\Gamma$-equivariant, and continuous.
However, $\tau$ is  not surjective since $\iota$ is not.
Thus, the shift system $(S^\Gamma,\Gamma)$ is never surjunctive when the alphabet  $S$ is compressible.
If $M$ is a closed topological manifold and the group $\Gamma$ is residually finite, 
it was observed in \cite[Corollary~7.8]{csc-concrete} that  the 
$\Gamma$-shift
$(M^\Gamma,\Gamma)$ is surjunctive.
 On the other hand, it follows from a deep theorem of Gromov~\cite{gromov-esav} and 
 Weiss~\cite{weiss-sgds} that,   if $\Gamma$ is a sofic group and $S$ is a finite discrete space, then
 the $\Gamma$-shift  $(S^\Gamma,\Gamma)$  is surjunctive.
Sofic groups form a wide class of groups containing in particular all residually amenable  groups
 but it is not known if the Gromov-Weiss surjunctivity theorem remains valid  for all groups (\emph{Gottschalk conjecture}). Actually, there is no example of a non-sofic group up to now
 although it seems that many experts in the field do believe in the existence of non-sofic groups.
 \par
 One says that a dynamical system  $(X,\Gamma)$ is \emph{expansive} if there exists a constant 
$\delta = \delta(X,\Gamma,d)  > 0$ such that, for every pair of distinct points $x,y \in X$, there exists an element
$\gamma = \gamma(x,y)  \in \Gamma$ such that
$d(\gamma  x,\gamma  y) \geq  \delta$. Here $d$ denotes a compatible metric on $X$.
The fact that this  definition does not depend on the choice of $d$
follows from the  compactness of $X$.
For instance, a shift system $(S^\Gamma,\Gamma)$ is expansive if and only if the alphabet $S$ is finite.
\par
Given a dynamical system $(X,\Gamma)$, a point $x \in X$ is said to be $\Gamma$-\emph{periodic}
if its $\Gamma$-orbit $\Gamma x :=  \{\gamma x : \gamma \in \Gamma\} \subset X$ is finite.
In~\cite[Proposition~5.1]{csc-ijm-2015}, it was observed that if $(X,\Gamma)$ is an expansive dynamical system whose periodic points are dense in  $X$,  then $(X,\Gamma)$ is surjunctive.
\par
Note that satisfying the d.c.c.\  is a hereditary property, in the sense that every subsystem of a  dynamical system satisfying the d.c.c.\ 
satisfies it as well. Expansivity is also hereditary 
but surjunctivity and density of periodic points are not. 
Consider for example the $\Z$-shift $(\{0,1\}^\Z,\Z)$
on the alphabet with two elements $0$ and $1$, 
and the  closed $\Z$-invariant subset  $\Sigma \subset \{0,1\}^\Z$ consisting of 
all bi-infinite sequences of $0$s and $1$s  with at most one chain of $1$s.
Then the continuous map $\tau \colon \Sigma \to \Sigma$ given by
$$
\tau(x)(n) =
\begin{cases}
1 & \text{if } (x(n),x(n+1)) = (0,1) \\
x(n) & \text{otherwise},
\end{cases}   
$$
is clearly  injective and commutes with the shift.
However, $\tau$ is not surjective since
a sequence of $0$s and $1$s where $1$ appears exactly once is in $\Sigma$ but not in the image 
of $\tau$.
Thus, the subsystem $(\Sigma,\Z)$ is not surjunctive.
On the other hand, 
periodic sequences are dense in $\{0,1\}^\Z$
while the only periodic sequences in $\Sigma$ are the two constant ones.
This example also shows that an expansive dynamical system may fail to be surjunctive.
\par
An \emph{algebraic dynamical system}
is a dynamical system of the form $(X, \Gamma)$, where $X$ is a compact metrizable topological group and $\Gamma$ is a countable group acting on $X$  by continuous group automorphisms.
Observe that if $S$ is any compact metrizable group, then the $\Gamma$-shift $(S^\Gamma,\Gamma)$ is an algebraic dynamical system for every countable group $\Gamma$. 
\par
Algebraic dynamical systems have been intensively studied in the last decades
(see the monograph by Schmidt~\cite{schmidt-book} and the references therein). As it was already observed by Halmos~\cite{halmos}  in the particular case $\Gamma = \Z$,
they provide a large supply of interesting examples for ergodic theory.
Moreover, Pontryagin duality can be fruitfully used when $X$ is abelian to analyse them.
\par
The goal of this note is to give sufficient conditions for an algebraic dynamical system to be surjunctive. 
 The first of our  results is the following.

\begin{theorem}
\label{t:th1}
Let $X$ be a  compact metrizable  group equipped with an action of a countable group $\Gamma$ by continuous group automorphisms. Suppose that $X$ is connected with finite topological dimension
and that the action of $\Gamma$ on $X$ is expansive. 
Then the dynamical system $(X,\Gamma)$ is surjunctive.
\end{theorem}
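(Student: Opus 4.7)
The plan is to combine expansivity with the finite-dimensionality hypothesis to reduce to a setting where Proposition~5.1 of \cite{csc-ijm-2015} applies (expansive plus dense periodic points implies surjunctive). The route has three stages: first reduce to the case when $X$ is abelian, then dualize, and finally verify density of periodic points.

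\textbf{Reduction to abelian $X$.} Let $X' := \overline{[X,X]}$. This is a characteristic closed connected subgroup of $X$, of finite topological dimension, and it is perfect, hence semisimple. A compact connected semisimple group of finite dimension is necessarily a Lie group, so $X'$ is a compact semisimple Lie group; consequently $\Aut(X')$ is itself compact (its identity component is $X'/Z(X')$, and the outer automorphism group is finite). The induced $\Gamma$-action on $X'$ therefore factors through a relatively compact image in $\Aut(X')$ and is equicontinuous. On the other hand, the expansivity of the $\Gamma$-action on $X$ restricts to expansivity on the $\Gamma$-invariant subset $X'$. Since equicontinuity and expansivity on an infinite compact space are incompatible, we must have $X' = \{e\}$; thus $X$ is abelian.

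\textbf{Pontryagin dual.} Set $M := \widehat X$, a countable discrete abelian group carrying a $\Z[\Gamma]$-module structure induced by the action of $\Gamma$ on $X$. Connectedness of $X$ makes $M$ torsion-free, while $\dim X < \infty$ translates to $M$ having finite $\Q$-rank, so $M$ embeds in $\Q^r$ for some $r \geq 0$.

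\textbf{Density of periodic points.} Under Pontryagin duality, $\Gamma$-periodic points of $X$ correspond to characters of $M$ factoring through $\Gamma$-invariant finite-index subgroups of $M$, and density of the periodic points in $X$ is equivalent to the residual finiteness of $M$ as a $\Gamma$-module (that is, the intersection of such subgroups is trivial). Using the embedding $M \hookrightarrow \Q^r$ together with the structural constraints imposed by the expansive $\Z[\Gamma]$-action (notably, finite generation of $M$ over $\Z[\Gamma]$), one produces, for each nonzero $m \in M$, a $\Gamma$-invariant finite-index subgroup of $M$ that excludes $m$ — typically by a reduction modulo primes adapted to the denominators of $m$. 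Given expansivity and density of periodic points, Proposition~5.1 of \cite{csc-ijm-2015} then delivers surjunctivity.

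The principal obstacle is the density-of-periodic-points step. Residual finiteness of finite-rank torsion-free abelian groups as abstract abelian groups is elementary, but carrying out the construction $\Gamma$-equivariantly is the delicate point; it is here that the finite-dimensionality hypothesis is decisive, since it confines $M$ to the tractable class of finite-$\Q$-rank $\Z[\Gamma]$-modules where equivariant reduction-modulo-primes arguments can be made to work.
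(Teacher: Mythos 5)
Your reduction to the abelian case is fine in spirit (the paper simply cites Lam's theorem \cite[Theorem~3.2]{lam} for this), and the dualization step is correct. The problem is the third stage, which is exactly where you admit the difficulty lies: you never actually prove density of periodic points, and for a \emph{general} countable group $\Gamma$ this is not a technicality that a ``reduction modulo primes adapted to the denominators of $m$'' can be expected to fix. A point of $X$ is $\Gamma$-periodic precisely when its stabilizer has finite index in $\Gamma$; if $\Gamma$ has no proper finite-index subgroups (e.g.\ an infinite simple group), the periodic points are exactly the $\Gamma$-fixed points, which form a proper closed subgroup whenever the action is expansive and $X$ is infinite. So your strategy would require, at a minimum, ruling out expansive actions of all such groups on solenoids --- something your sketch does not address. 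The known density result (Kitchens--Schmidt, used in the paper only for Theorem~\ref{t:th2}) is specific to $\Gamma=\Z^d$ and is itself a substantial theorem, not a consequence of residual finiteness of finite-rank torsion-free abelian groups. Your stated duality (``periodic points correspond to characters factoring through $\Gamma$-invariant finite-index subgroups of $M$'') is also off: a character fixed by a finite-index subgroup $\Lambda\le\Gamma$ is one killing the subgroup of $M$ generated by the elements $\lambda m-m$, which has nothing to do with finite-index subgroups of $M$ in general.

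The paper takes a different and shorter route that avoids periodic points entirely: by a rigidity theorem of Bhattacharya \cite[Corollary~1]{bhattacharya}, every $\Gamma$-equivariant continuous self-map of such an $X$ is \emph{affine}, $\tau(x)=\alpha(x)+b$ with $\alpha$ a continuous endomorphism; one is then reduced to showing that an injective continuous endomorphism of a finite-dimensional compact connected abelian group is surjective, which follows by Pontryagin duality and elementary linear algebra over $\Q$ (Lemma~\ref{l:inj-endo-surj}). If you want to salvage your approach, you would either need to restrict to $\Gamma=\Z^d$ (where it collapses into Theorem~\ref{t:th2}) or supply a genuine proof of density of periodic points for arbitrary countable $\Gamma$, which is the missing --- and possibly false --- ingredient.
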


\begin{remark}
\label{r:lam}
By a result of Lam \cite[Theorem~3.2]{lam},
if a compact connected metrizable group $X$ admits an expansive action of a countable group 
$\Gamma$ by continuous group automorphisms,  then $X$ is necessarily abelian.
Thus, every group $X$ that satisfies  the hypotheses of 
 Theorem~\ref{t:th1} is abelian.
 Finite-dimensional connected metrizable abelian groups are often called \emph{solenoids} in the literature. By Pontryagin duality, solenoids are in one-to-one correspondance with finite-rank torsion-free abelian groups (see e.g.~\cite{morris}).
\end{remark}

\begin{theorem}
\label{t:th2}
Let $X$ be a compact metrizable group equipped with an action of  $\Z^d$ by continuous group automorphisms.
Suppose that the  action of $\Z^d$ on $X$ is expansive.
Then the dynamical system $(X,\Z^d)$ is surjunctive.
\end{theorem}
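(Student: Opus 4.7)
The plan is to show that the set of $\Z^d$-periodic points of $X$ is dense, and then invoke the result from the introduction (Proposition~5.1 of~\cite{csc-ijm-2015}) that an expansive dynamical system with a dense set of periodic points is surjunctive. Since the hypothesis already provides expansivity, surjunctivity will follow as soon as density of periodic points is established.

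As a preliminary reduction, let $X_0 \subset X$ be the connected component of the identity. Then $X_0$ is a closed, connected, normal, $\Z^d$-invariant subgroup and the induced $\Z^d$-action on $X_0$ is still expansive. By Lam's theorem (see Remark~\ref{r:lam}), $X_0$ is abelian. The quotient $X/X_0$ is a profinite metrizable group on which $\Z^d$ acts by continuous automorphisms, and the induced action is again expansive.

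The central input is Schmidt's classical theorem (see~\cite{schmidt-book}) to the effect that every expansive algebraic $\Z^d$-action on a compact metrizable abelian group admits a dense set of periodic points. Its proof runs through Pontryagin duality: the discrete dual becomes a finitely generated module over the Noetherian Laurent polynomial ring $R_d = \Z[u_1^{\pm 1}, \ldots, u_d^{\pm 1}]$, expansivity forces the associated primes to avoid the multiplicative $d$-torus, and under these conditions the elements of the dual annihilated by finite-index ideals of $R_d$ (which dualize to the periodic points of the group) are dense. Applied to $X_0$, this yields density of periodic points in $X_0$. Density of periodic points in the profinite piece $X/X_0$ is easy, since every expansive $\Z^d$-action on a profinite group can be exhibited as the inverse limit of its finite $\Z^d$-equivariant quotients.

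The main obstacle is to combine these two pieces into density of $\Z^d$-periodic points in the whole $X$: given $x \in X$ and a neighbourhood $U$ of $x$, one first approximates the coset $xX_0 \in X/X_0$ by a periodic coset $yX_0$, then passes to the finite-index sublattice $\Lambda \subset \Z^d$ stabilising $yX_0$, and finally approximates $x$ within the single coset $yX_0 \subset X$ by a $\Lambda$-periodic point. The last step is a density-of-periodic-points statement for an expansive $\Lambda$-action on (a translate of) the abelian group $X_0$, to which Schmidt's theorem applies once more; this step requires verifying that expansivity of the $\Z^d$-action restricts to expansivity of the $\Lambda$-action, which is standard. Once density of periodic points in $X$ is established, Proposition~5.1 of~\cite{csc-ijm-2015} delivers surjunctivity.
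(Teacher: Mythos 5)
Your overall strategy is exactly the paper's: establish density of $\Z^d$-periodic points and then invoke \cite[Proposition~5.1]{csc-ijm-2015} for expansive systems. The only difference is that the paper does not re-derive the density of periodic points at all --- it simply cites Kitchens and Schmidt \cite[Corollary~7.4]{kitchens-schmidt-1989}, which asserts precisely that an expansive $\Z^d$-action by automorphisms on a compact (not necessarily abelian, not necessarily connected) group has dense periodic points. What you attempt instead is a sketch of that theorem, and that is where the real work is hiding. Two steps of your sketch are substantially harder than you indicate. First, the profinite quotient $X/X_0$: an expansive zero-dimensional algebraic $\Z^d$-system is a group subshift over a finite alphabet, and density of periodic points for such systems is one of the main theorems of \cite{kitchens-schmidt-1989}, not an easy consequence of writing the group as an inverse limit; moreover, even granting a presentation as $\varprojlim X_i$ with $X_i$ finite and $\Z^d$-equivariant, a point of $X$ projecting to periodic points of every $X_i$ need not itself be periodic (the stabilizing lattices can shrink), so lifting periodicity along the inverse limit requires an argument. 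Second, your gluing step asks for periodic points of a $\Lambda$-action on the coset $yX_0$, which is an \emph{affine} action on a translate of $X_0$, not an action by automorphisms; Schmidt's density theorem for automorphism actions does not apply verbatim, and producing a $\Lambda$-fixed point in a prescribed coset amounts to a solvability statement that again is part of the content of \cite{kitchens-schmidt-1989}. None of this invalidates your plan --- it is the correct plan --- but as written the proof of the density statement is incomplete; the efficient fix is to quote \cite[Corollary~7.4]{kitchens-schmidt-1989} directly, as the paper does.
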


As every shift over a finite alphabet is expansive and any subsystem of an expansive system is itself expansive,
an immediate consequence of Theorem~\ref{t:th2} is the following.

\begin{corollary}
Let $S$ be a (possibly non-abelian) finite discrete group and let $X \subset S^{\Z^d}$ be a closed subgroup that is invariant under the $\Z^d$-shift.
Then the algebraic dynamical system $(X,\Z^d)$, where the action of $\Z^d$ on $X$ is the one induced by restriction of the $\Z^d$-shift,  is surjunctive.
\qed 
\end{corollary}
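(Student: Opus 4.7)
The plan is to verify that the system $(X,\Z^d)$ satisfies the hypotheses of Theorem~\ref{t:th2} and then invoke that theorem. The argument is very short because all required structure is inherited from the ambient full shift.

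First, I would observe that $(X,\Z^d)$ is an algebraic dynamical system in the sense of the paper. Since $S$ is a finite discrete group, the product $S^{\Z^d}$ with coordinatewise multiplication and the product topology is a compact metrizable topological group. The $\Z^d$-shift acts on $S^{\Z^d}$ by continuous group automorphisms. As $X \subset S^{\Z^d}$ is a closed subgroup that is shift-invariant, $X$ is itself a compact metrizable group and the restricted action of $\Z^d$ on $X$ is by continuous group automorphisms.

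Next, I would verify that this action is expansive. As noted in the introduction, for a finite alphabet $S$ the full shift $(S^{\Z^d},\Z^d)$ is expansive; explicitly, fixing any compatible product metric $d$ on $S^{\Z^d}$, one can take as expansivity constant $\delta := \tfrac{1}{2}\min\{d(x,y) : x,y \in S^{\Z^d},\ x(0)\neq y(0)\}$, which is positive because $S$ is finite, and then any two distinct $x,y \in S^{\Z^d}$ differ at some $n\in \Z^d$ so that the translated configurations are $\delta$-separated at the identity coordinate. Since expansivity is hereditary, the restriction of this action to the closed $\Z^d$-invariant subset $X$ is again expansive (with the same constant, using the induced metric).

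With both ingredients in place, Theorem~\ref{t:th2} applies directly and yields the surjunctivity of $(X,\Z^d)$. There is no substantive obstacle to overcome: the corollary is a formal consequence of Theorem~\ref{t:th2} together with the two easy observations above (expansivity of finite-alphabet shifts and heredity of expansivity), as already signalled in the sentence preceding the statement. The genuine work lies entirely in the proof of Theorem~\ref{t:th2}, not in its deduction of this corollary.
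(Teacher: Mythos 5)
Your proposal is correct and follows exactly the paper's intended route: the corollary is deduced from Theorem~\ref{t:th2} by noting that $X$ is a compact metrizable group on which $\Z^d$ acts by continuous automorphisms and that expansivity passes from the finite-alphabet full shift to the closed invariant subgroup $X$. This matches the one-sentence justification the paper gives immediately before the statement.
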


If  $(X,\Gamma)$ is an algebraic dynamical system, then the  Haar probability measure $\lambda_X$ on $X$ is $\Gamma$-invariant, i.e., $\lambda_X(\gamma A) = \lambda_X(A)$ for every measurable subset 
$A \subset X$ (this imediately follows from the uniqueness of Haar measure).
One says that $(X,\Gamma)$ is \emph{mixing} if
\begin{equation}
\label{e:def-mixing}
\lim\limits_{\gamma \to\infty}
\lambda_{X}(A_{1} \cap \gamma A_{2})
=\lambda_{X}(A_{1})\cdot\lambda_{X}(A_{2})
\end{equation}
for all measurable subsets $A_1,A_2 \subset X$ (here $\infty$ denotes the point at infinity in the 
one-point compactification of the discrete group $\Gamma$).
For example, the $\Gamma$-shift $(S^\Gamma,\Gamma)$ is mixing for any 
compact metrizable group $S$ and any countable group $\Gamma$  (just observe 
that~\eqref{e:def-mixing}  is obvious when 
$A_1, A_2 \subset S^\Gamma$ are cylinders).
\par
    One says that an algebraic  dynamical system $(X,\Gamma)$ satisfies the \emph{algebraic descending chain condition} (\emph{a.d.c.c.}\ for short)  if every decreasing sequence
$$
X = X_0 \supset X_1 \supset X_2 \supset \dots
$$
of $\Gamma$-invariant closed subgrops $X_i \subset X$ ($i \in \N$) eventually stabilizes.

\begin{theorem}
\label{t:th3}
Let $X$ be a compact metrizable abelian group equipped with an action of  $\Z^d$ by continuous group automorphisms.
Suppose that $X$ is connected and that  $(X,\Z^d)$ is mixing, satisfies the algebraic descending chain condition, and has finite topological entropy.
Then the dynamical system $(X,\Z^d)$ is surjunctive.
\end{theorem}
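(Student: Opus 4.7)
The core idea is to reduce surjunctivity to two concrete dynamical properties: density of the set of $\Z^d$-periodic points in $X$, and finiteness of $\Fix(H) := \{x \in X : hx = x \text{ for all } h \in H\}$ for every finite-index subgroup $H \leq \Z^d$. Granting these, the argument is short: given an injective, continuous, $\Z^d$-equivariant $\tau : X \to X$, equivariance forces $\tau(\Fix(H)) \subseteq \Fix(H)$, and injectivity on the finite set $\Fix(H)$ upgrades this inclusion to a bijection; taking the union over all finite-index $H$ shows that $\tau$ surjects onto the (dense) set of periodic points, so $\tau(X) = X$ by closedness of $\tau(X)$. This is the same mechanism underlying \cite[Proposition~5.1]{csc-ijm-2015} in the expansive case.

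\medskip

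To verify these two properties under the hypotheses of the theorem, I pass to the Pontryagin dual $M := \widehat{X}$, viewed as a module over the Laurent ring $R_d := \Z[\Z^d] \cong \Z[u_1^{\pm 1}, \ldots, u_d^{\pm 1}]$. Since $R_d$ is Noetherian, the algebraic descending chain condition on $X$ is equivalent to $M$ being a finitely generated $R_d$-module, and connectedness of $X$ is equivalent to $M$ being torsion-free as an abelian group. For a finite-index $H \leq \Z^d$, the closed subgroup $\Fix(H) \leq X$ is Pontryagin-dual to the quotient $M / I_H M$, where $I_H \subseteq R_d$ is the ideal generated by $\{u^h - 1 : h \in H\}$; hence $|\Fix(H)|$ is finite precisely when $M / I_H M$ is finite.

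\medskip

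Both of the required properties now follow from the structure theory of Noetherian algebraic $\Z^d$-actions developed by Schmidt, Lind, Ward and Kitchens (see \cite{schmidt-book}). The Lind--Schmidt--Ward entropy formula expresses the topological entropy of $(X,\Z^d)$ as the normalized limit $\lim_n [\Z^d : H_n]^{-1} \log |\Fix(H_n)|$ along a suitable sequence of finite-index subgroups $H_n \leq \Z^d$; finiteness of the entropy therefore forces each $|\Fix(H_n)|$ to be finite, and since any finite-index $H$ satisfies $\Fix(H) \subseteq \Fix(n\Z^d)$ for $n = [\Z^d : H]$, we obtain $|\Fix(H)| < \infty$ in general. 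Mixing enters here via Schmidt's mixing criterion, which prevents $u^h - 1$ from lying in any associated prime of $M$; without this, a positive-dimensional subgroup of $X$ could be fixed by some $H$, which would make $|\Fix(H)|$ and hence the entropy infinite. Density of periodic points, under exactly the same hypotheses, is likewise a classical consequence of Schmidt's theory for connected, Noetherian, mixing algebraic $\Z^d$-actions of finite entropy. \emph{The main obstacle} is precisely this verification that $|\Fix(H)| < \infty$, which requires combining all four hypotheses of the theorem in a non-trivial way through the analysis of associated primes of $R_d$-modules and the Lind--Schmidt--Ward entropy formula; once in hand, surjunctivity follows formally.
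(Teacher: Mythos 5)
Your reduction of surjunctivity to (i) density of the periodic points and (ii) finiteness of every $\Fix(H)$ is internally coherent, but claim (ii) is \emph{false} under the hypotheses of the theorem, so the proof has a genuine gap. The counterexample is the paper's own flagship application, the connected Ledrappier subshift of Example~\ref{ex:ledrappier}: there $\widehat{X} \cong R_2/(1+u_1+u_2)$ is torsion-free, Noetherian, mixing and of finite entropy, and yet $\Fix(3\Z^2)$ is infinite. Indeed $\Fix(3\Z^2)$ is dual to $R_2/(1+u_1+u_2,\,u_1^3-1,\,u_2^3-1)$, and this quotient has positive rank as an abelian group because $(\omega,\omega^2)$ with $\omega = e^{2\pi i/3}$ is a common zero of the three generators; consequently $\Fix(3\Z^2)$ contains a torus of positive dimension. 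The inference ``finite entropy forces $|\Fix(H_n)|<\infty$'' via the periodic-point entropy formula is exactly where this breaks down: the formula $h = \lim_n [\Z^d:H_n]^{-1}\log|\Fix(H_n)|$ is a theorem only for \emph{expansive} actions, whereas Theorem~\ref{t:th3} is aimed precisely at the non-expansive case (for expansive actions the paper already has Theorem~\ref{t:th2}). Similarly, mixing does keep $u^h-1$ out of the associated primes of $M$, but that does not make $M/I_HM$ finite: in the Ledrappier example $u_i^3-1 \notin (1+u_1+u_2)$ and the quotient is still infinite. Finally, the density of periodic points is asserted as ``classical'' without proof; the Kitchens--Schmidt density theorem invoked for Theorem~\ref{t:th2} again assumes expansiveness, and no substitute argument is given under the present hypotheses.

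The paper's proof goes a different and much shorter way: by the Bhattacharya--Ward rigidity theorem, connectedness, mixing and finite entropy force every $\Gamma$-equivariant continuous map $\tau$ to be affine, $\tau = \alpha + b$ with $\alpha$ a continuous group endomorphism; injectivity of $\tau$ gives injectivity of $\alpha$, and the a.d.c.c.\ applied to the chain $X \supset \alpha(X) \supset \alpha^2(X) \supset \cdots$ of closed $\Z^d$-invariant subgroups yields $\alpha(X) = X$, hence $\tau(X)=X$. If you wish to salvage your route, note that each $\Fix(H)$ is dual to $M/I_HM$, which is a finitely generated abelian group, so $\Fix(H)$ is a finite disjoint union of finite-dimensional tori and is therefore incompressible by invariance of domain; an injective continuous self-map of $\Fix(H)$ is then automatically surjective even when $\Fix(H)$ is infinite. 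But you would still owe a genuine proof that the periodic points are dense under exactly these four hypotheses, which is the harder half of your plan and is not supplied.
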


One can use Pontryagin duality to check that the hypotheses of Theorem~\ref{t:th3} are satisfied. 
Let  $X$ be a compact metrizable abelian group. Then its character group
$\widehat{X}$ is a discrete countable group.
Recall first that $X$ is connected if and only if $\widehat{X}$ is torsion-free.
When $X$ is equipped with a continous action of $\Gamma = \Z^d$, this action gives rise to a 
$\Z[\Gamma]$-module structure on $X$ and hence, by dualizing, to a $\Z[\Gamma]$-module structure on $\widehat{X}$. Here $\Z[\Gamma]$ denotes the group ring of $\Gamma = \Z^d$, which may be identified with the ring
$R_d := \Z[u_1,u_1^{-1},\dots,u_d,u_d^{-1}]$
 of Laurent polynomials with integral coefficients on $d$ commuting indeterminates.
 Then $(X,\Z^d)$ satisfies the a.d.c.c.\ if and only if $\widehat{X}$ is Noetherian as a $\Z[\Gamma]$-module. As the ring $R_d$ is Noetherian, this amount to saying that $\widehat{X}$ is a finitely generated module. Also, the fact that $(X,\Z^d)$ is mixing and has finite topological entropy can be read on   the $\Z[\Gamma]$-module structure of $\widehat{X}$
 (see~\cite{schmidt-book} and Example~\ref{ex:ledrappier-generalized} below).

\section{Proofs}

For the proof of Theorem~\ref{t:th1}, we shall use the following result.

\begin{lemma}
\label{l:inj-endo-surj}
Let $X$ be a compact connected metrizable abelian group with finite topological dimension.
Then every injective continuous group endomorphism of $X$ is surjective.
\end{lemma}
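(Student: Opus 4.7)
The plan is to use Pontryagin duality to reduce the lemma to a purely algebraic statement about endomorphisms of finite-rank torsion-free abelian groups. Let $\widehat{X}$ denote the Pontryagin dual of $X$. Under our hypotheses, $\widehat{X}$ is countable and discrete (since $X$ is compact and metrizable) and torsion-free (since $X$ is connected). Moreover, a classical result on solenoids (see e.g.~\cite{morris}) identifies the topological dimension of a compact connected metrizable abelian group with the torsion-free rank of its dual; hence $\widehat{X}$ has finite rank $r = \dim X$.

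Let $\tau \colon X \to X$ be an injective continuous group endomorphism and let $\widehat{\tau} \colon \widehat{X} \to \widehat{X}$ be its Pontryagin dual. By the standard annihilator correspondence, $\tau$ is injective if and only if the image of $\widehat{\tau}$ is dense in $\widehat{X}$, and $\tau$ is surjective if and only if $\widehat{\tau}$ is injective. Since $\widehat{X}$ is discrete, density of the image of $\widehat{\tau}$ amounts to its surjectivity. Thus the lemma reduces to the following algebraic statement: every surjective group endomorphism of a torsion-free abelian group of finite rank is injective.

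To establish this, let $A$ be a torsion-free abelian group of finite rank and let $\phi \colon A \to A$ be a surjective endomorphism. Tensoring with $\Q$ over $\Z$ yields a surjective $\Q$-linear endomorphism $\phi \otimes \Id_\Q$ of the finite-dimensional $\Q$-vector space $A \otimes_\Z \Q$, which is therefore bijective. Since $A$ is torsion-free, the canonical map $A \to A \otimes_\Z \Q$ is injective, so if $\phi(a) = 0$ then $(\phi \otimes \Id_\Q)(a \otimes 1) = 0$ gives $a \otimes 1 = 0$ and hence $a = 0$. This forces $\phi$ to be injective and completes the reduction. The main thing to get right is the translation step, in particular the dimension-rank correspondence for solenoids and the correct direction of the kernel/image annihilator duality; once these are in place, the algebraic endgame is immediate from linear algebra over $\Q$.
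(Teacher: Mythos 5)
Your proof is correct and follows essentially the same route as the paper: dualize, use the annihilator correspondence to turn injectivity of $\tau$ into surjectivity of $\widehat{\tau}$ on the finite-rank torsion-free group $\widehat{X}$, tensor with $\Q$ to invoke the fact that a surjective endomorphism of a finite-dimensional vector space is injective, and descend via the embedding $\widehat{X} \hookrightarrow \widehat{X}\otimes_\Z \Q$ to conclude $\widehat{\tau}$ is injective, i.e., $\tau$ is surjective. The only cosmetic difference is that the paper cites \cite{morris} for the duality step where you argue it directly via density of the image in a discrete group.
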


\begin{proof}
Let $\alpha \colon X \to X$ be an injective continuous group morphism and let $n$ denote the topological dimension of $X$.
Then the Pontryagin dual $\widehat{X}$ of $X$ is a discrete torsion-free group with rank $n$.
Therefore $V := \widehat{X} \otimes_\Z \Q$ is an $n$-dimensional $\Q$-vector space and
$\widehat{X}$ embeds as a subgroup of $V$ via the map $\phi \mapsto  \phi \otimes_\Z 1$.
Furthermore the dual endomorphism $\widehat{\alpha}$ uniquely extends to a $\Q$-linear map 
$\widehat{\alpha}_\Q \colon V \to V$.
The injectivity of $\alpha$ implies the
surjectivity of $\widehat{\alpha}$
(see~\cite[Proposition~30]{morris}) 
and hence the surjectivity of $\widehat{\alpha}_\Q$.
As every surjective endomorphism of a finite-dimensional vector space is injective,  it follows that
$\widehat{\alpha}_\Q$ is injective.
As $\widehat{\alpha}$ is the restriction of $\widehat{\alpha}_\Q$ to $\widehat{X}$, we deduce that 
$\widehat{\alpha}$ is itself injective and therefore we conclude that
$\alpha = \widehat{\widehat{\alpha}}$ is surjective.
  \end{proof}
  
\begin{proof}[Proof of Theorem~\ref{t:th1}]
As pointed out in the Introduction, the group $X$ is abelian by \cite[Theorem~3.2]{lam}.
Let $\tau \colon X \to X$ be an injective   $\Gamma$-equivariant continuous map.
It follows from  a rigidity result of Bhattacharya \cite[Corollary~1]{bhattacharya}  that $\tau$ is an affine map, i.e., there is a continuous group endomorphism
$\alpha  \colon X \to X$ and an element $b \in X$ such that
$\tau(x) = \alpha(x) + b$ for all $x \in X$.
The injectivity of $\tau$ is clearly equivalent to   that of $\alpha$.
Applying Lemma~\ref{l:inj-endo-surj}, we deduce that $\alpha$ is surjective.
This  implies that $\tau$ is surjective as well. 
\end{proof}

\begin{proof}[Proof of Theorem~\ref{t:th2}]
It follows from a result of Kitchens and Schmidt
\cite[Corollary~7.4]{kitchens-schmidt-1989} that, under theses hypotheses, the $\Gamma$-periodic points are dense in $X$.
On the other hand, as already mentioned in the Introduction,  it is known 
\cite[Proposition~5.1]{csc-ijm-2015} that every expansive dynamical system 
admitting a dense set of periodic points is surjunctive. 
\end{proof}

\begin{proof}[Proof of Theorem~\ref{t:th3}]
Let $\tau \colon X \to X$ be an injective   $\Gamma$-equivariant continuous map.
By a rigidity result due to Bhattacharya and Ward~\cite[Theorem~1.1]{bhattacharya-ward},
one has that $\tau$ is affine, that is, there is a continuous group endomorphism $\alpha \colon X \to X$ and an element
$b \in X$ such that  $\tau(x) = \alpha(x) + b$ for all $x \in X$. Since $\tau$ is injective, so is $\alpha$.
Consider now the sequence
\[
X = \alpha^0(X) \supset \alpha(X) = \alpha^1(X) \supset \alpha^2(X) \supset \cdots
\]
of $\Z^d$-invariant closed subgroups of $X$. 
Since $(X,\Z^d)$ satisfies the a.d.c.c., arguing as in the Introduction, we deduce that $X = \alpha(X)$.
It follows that $X = \alpha(X)+b = \tau(X)$, that is, $\tau$ is surjective.
\end{proof}

\section{Examples}

  \begin{example}
  The hypothesis that $X$ is finite-dimensional cannot be dropped from Lemma~\ref{l:inj-endo-surj}.
  Indeed,  $X := \T^\N$, where $\T := \R/\Z$ and $\N$ is the set of non-negative integers, is  a compact connected metrizable abelian group.
  However, the map $\alpha \colon X \to X$, given by
$\alpha(x)(0) := 0$ and $  \alpha(x)(n) := x(n - 1)$  for all $x \in X$ and $n \geq 1$,
   is a continuous injective group endomorphism of $X$ that is not surjective.
   Note that, for any countable group $mma$,  the $\Gamma$-shift $(X^\Gamma,\Gamma)$ yields an example of a non-surjunctive mixing algebraic dynamical system whose phase space 
is connected and abelian.
  \end{example}
  
 \begin{example}
    The hypothesis that $X$ is connected cannot be dropped from Lemma~\ref{l:inj-endo-surj}.
    Indeed, let $p$ be any prime and  thake $X := \Z_p$, the group of $p$-adic integers.
Then $X$ is a $0$-dimensional compact metrizable abelian group.    
However, the map $\alpha \colon X \to X$, defined by  $\alpha(x) := p x$ for all $x \in X$, is an injective continuous group endomorphism of $X$ that is not surjective.
Note that, for any countable group $\Gamma$,  the $\Gamma$-shift $(X^\Gamma,\Gamma)$ yields an example of a non-surjunctive mixing algebraic dynamical system whose phase space 
is $0$-dimensional and abelian.
\end{example}

\begin{example}
\label{ex:ledrappier}
Consider the $\Z^2$-shift $(\T^{\Z^2},\Z^2)$ and the closed $\Z^2$-invariant subgroup 
$X \subset \T^{\Z^2}$ consisting of all $x \colon  \Z^2 \to \T$ such that
$$
x(n_1 + 1,n_2) + x(n_1,n_2) + x(n_1,n_2 + 1) = 0_\T \quad \text{for all  } (n_1,n_2)   \in \Z^2.
$$
Observe that $X$ is connected. Indeed, consider the subset $E \subset \Z^2$ defined by
$$
E := \{(n_1,n_2) \in \Z^2 : n_2 = 0 \text{ or } (n_1 = 0 \text{ and } n_2 < 0)\}.
$$
Then the  restriction map $X \to \T^E$ is clearly a continuous group isomorphism.
Consequently, $X$  is isomorphic, as a topological group, to the infinite-dimensional torus $\T^\N$. 
The algebraic dynamical system $(X,\Z^2)$ is called the \emph{connected Ledrappier subshift}
(cf. \cite[Example~5.6]{lind-schmidt-handbook}, \cite{schmidt-book}).
It can be checked  that $(X,\Z^2)$ is mixing, satisfies the a.d.c.c., and has finite topological entropy
(see~\cite{schmidt-book}).
Therefore $(X,\Z^2)$ is surjunctive by Theorem~\ref{t:th3}.
Note that $(X,\Z^2)$ is not expansive (see~\cite{schmidt-book}) so that we could not have invoked Theorem~\ref{t:th2}
to get surjunctivity of $(X,\Z^2)$. 
\end{example}

\begin{example}
\label{ex:ledrappier-generalized}
The previous example may be generalized as follows.
Let $d \geq 1$ be an  integer and $f \in R_d = \Z[u_1,u_1^{-1}, \ldots, u_d,u_d^{-1}]$ an irreducible
(and hence non-zero) 
Laurent polynomial. Then
\[
f = \sum_{m \in \Z^d} f_m u^m,
\]
where $u^m := u_1^{m_1}\cdots u_d^{m_d}$, $f_m \in \Z$ for all
$m = (m_1,  \ldots, m_d) \in \Z^d$ and $f_m = 0$ for all but finitely many $m \in \Z^d$.
Consider the $\Z^d$-shift $(\T^{\Z^d}, \Z^d)$ and the closed $\Z^d$-invariant subgroup $X \subset \T^{\Z^d}$
consisting of all $x \colon \Z^d \to \T$ such that
\[
\sum_{m \in \Z^d} {f_m } x(n+m) = 0 \mbox{ for all } n \in \Z^d.
\]
Let $\C^* := \C \setminus \{0\}$ denote the non-zero  complex numbers and
$U \subset \C^*$ the unit circle.
\par
The \emph{variety} of $f$ is the subset $V_\C(f) \subset (\C^*)^d$ defined by
\[
V_\C(f) := \{c \in (\C^*)^d: f(c) = 0\}.
\]
Then the following holds (cf.\  \cite[Theorem 6.5 and Theorem 18.1]{schmidt-book}):
\begin{enumerate}[{\rm (1)}]
\item $\widehat{X}$ is a cyclic $R_d$-module isomorphic to $R_d/fR_d$;
\item $(X, \Z^d)$ satisfies the algebraic descending chain condition;
\item $(X, \Z^d)$ has finite topological entropy $h_{top}(X ,\Z^d) = \vert \log$\! M$(f) \vert$,
where $1 \leq $ M$(f) < \infty$ denotes the \emph{Mahler measure} of $f$;
\item the $\Z^d$-action on $X$ is mixing if and only if $f$ is not a generalized cyclotomic polynomial;
\item the $\Z^d$-action on $X$ is expansive if and only if $V_\C(f) \cap U^d = \varnothing$;
\item $X$ is connected if and only if $V_\C(f) \neq \varnothing$.
\end{enumerate}
Note that, in this context,
$$
X \text{ disconnected } \Rightarrow (X,\Z^d) \text{ expansive } \Rightarrow (X,\Z^d) \text{ mixing}. 
$$
\par
If $V_\C(f) = \varnothing$, then $(X,\Z^d)$ is expansive and
it follows from Theorem~\ref{t:th2} that
$(X,\Z^d)$ is surjunctive.
On the other hand, if
$V_\C(f) \not= \varnothing$ and $f$ is not a generalized cyclotomic polynomial,
then $X$ is connected and $(X,\Z^d)$ is mixing so that $(X,\Z^d)$
is surjunctive 
by  Theorem~\ref{t:th3}.
Consequently, $(X,\Z^d)$ is surjunctive as soon as $f$ is not a generalized cyclotomic polynomial.
\par
Let us give some examples of such systems. 
\par
First note (cf.\ \cite[Lemma 6.7]{schmidt-book}) that the space $X$ is disconnected, i.e., $V_\C(f) = \varnothing$, if and only if 
$f = \pm k u^m$ for some $k \in \Z \setminus \{0, 1,-1\}$ and $m \in \Z^d$. In that case, 
$(X,\Z^d)$ is conjugate to the $\Z^d$-shift $(S^{\Z^d}, \Z^d)$, where $S = \Z/k\Z$.
\par
If $d=1$ and $f = 3 - 2u_1$, then 
$$
X  = \{x \in \T^\Z: 3x(n) = 2x(n+1) \mbox{ for all } n \in \Z\}.
$$ This
is the \emph{Furstenberg subshift}.
Here $V_\C(f) = \{3/2\}$ so that $X$ is connected and $(X,\Z)$ is expansive.
Note that the evaluation map at $u_1 = 3/2$ induces a quotient map
$\widehat{X} = R_1/fR_1 \to \Z[1/6]$ that is an isomorphism of $R_1$-modules, where the $R_1$-module structure on $\Z[1/6]$ is the one generated by multiplication by $3/2$.    
\par
If $d=2$ and $f = 1 + u_1 + u_2$, 
then  $(X, \Z^2)$ is the connected Ledrappier subshift described in Example~\ref{ex:ledrappier}.
In that case, we have that
$$
V_\C(f) \cap U^2 = \{(\omega,\omega^2), (\omega^2, \omega)\},
$$ 
where $\omega = \exp(2\pi i /3)$), so that we recover the fact that $X$ is connected and  $(X,\Z^2)$ is mixing but not expansive.
\end{example}

\bibliographystyle{siam}

\end{document}